\newcommand\blfootnote[1]{%
  \begingroup
  \renewcommand\thefootnote{}\footnote{#1}%
  \addtocounter{footnote}{-1}%
  \endgroup
}
\newtheorem{theorem}{Theorem}[section]
\newtheorem{conjecture}[theorem]{Conjecture}
\newtheorem{corollary}[theorem]{Corollary}
\newtheorem{proposition}[theorem]{Proposition}
\newtheorem{lemma}[theorem]{Lemma}
\newtheorem{claim}[theorem]{Claim}
\begin{document}

\title{\bf A Cantor-Bendixson Rank for Siblings of Trees 
\blfootnote{2020 {\em Mathematics Subject Classification:} trees (05C05). \\ {\em Key words:} trees, siblings, leaves. \\ This paper is a project as part of author's thesis under supervision of Dr. Claude Laflamme and Dr. Robert Woodrow at the Department of Mathematics and Statistics, University of Calgary, Calgary, AB, Canada (2017-2022).}}

\author{Davoud Abdi} 

\maketitle              

\begin{abstract}
Similar to topological spaces, we introduce the Cantor-Bendixson rank of a tree $T$ by repeatedly removing the leaves and the isolated vertices of $T$ using transfinite recursion. Then, we give a representation of a tree $T$ as a leafless tree $T^\infty$ with some leafy trees attached to $T^\infty$.  With this representation at our disposal, we count the siblings of a tree and obtain partial results towards a conjecture of Bonato and Tardif. 
\end{abstract}

\section{Introduction}

Trees in this literature are in the graph theoretical sense, that is, connected and acyclic simple graphs.
An {\em embedding} from a tree $T$ to another tree $S$ is an injective map from the vertex set of $T$ to the vertex set of $S$ preserving the adjacency relation. An embedding of a tree $T$ is an embedding from $T$ to itself. The set of all embeddings of a tree $T$ forms a monoid under composition of functions called the {\em monoid of embeddings} of $T$, denoted by $Emb(T)$. Two trees are called {\em equimorphic} or {\em siblings} if there are mutual embeddings between them. Clearly, two equimorphic finite trees are isomorphic. But, this is no longer the case for infinite trees. For instance, a tree consisting of a vertex $r$ and countably many paths of length 2 attached to $r$ has countably many siblings, up to isomorphism. The number of isomorphism classes of siblings of a tree $T$ is called the {\em sibling number} of $T$, denoted by $Sib(T)$. 
Bonato and Tardif \cite{BT} made the following conjecture. 

\begin{conjecture}[The Tree Alternative Conjecture, \cite{BT}]
If T is a tree, then $Sib(T)=1$ or $\infty$.
\end{conjecture}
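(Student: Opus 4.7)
Since the Bonato--Tardif conjecture is a long-standing open problem and the paper's own abstract promises only partial results, I will sketch a strategy rather than a complete proof, indicating where the Cantor--Bendixson machinery should give traction and where the genuine obstruction lies.

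The plan is first to prove a rigidity lemma: every embedding $f \in Emb(T)$ preserves Cantor--Bendixson rank. A leaf of $T$ is a vertex of rank $0$, and its image under $f$ must remain a leaf of $f(T)$ once we restrict to the image; a transfinite induction, using the fact that the leaf-removal operation defining the rank is compatible with embeddings, then shows that $f$ restricts to an embedding of the leafless core $T^{\infty}$ and sends the leafy subtrees attached at $v$ into leafy subtrees attached at $f(v)$. This decomposition reduces the classification of siblings of $T$ to combining (i) siblings of $T^{\infty}$ with (ii) the ways of redistributing the attached leafy trees along $T^{\infty}$.

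The second step is to control (ii) using known results. The attached pieces are leafy, so iterated leaf removal exhausts them, and structurally they behave like rayless trees; for rayless trees the Tree Alternative Conjecture is known by work of Laflamme--Pouzet--Sauer, so I would invoke this to count attachment patterns up to isomorphism. A mix-and-match argument should then do the rest: if two non-isomorphic siblings of $T$ exist, I would try to produce infinitely many by repeatedly transposing leafy attachments along orbits of $Emb(T^{\infty})$, or by iterating a proper self-embedding of $T^{\infty}$ to shift attachments into genuinely new positions, exploiting the fact that the starting assumption $Sib(T) < \infty$ forces a great deal of rigidity on the joint action on $T^{\infty}$ and its attachment fibres.

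The main obstacle is the leafless core $T^{\infty}$ itself. Bounding $Sib(T^{\infty})$ is essentially the Tree Alternative Conjecture restricted to leafless trees, and no Cantor--Bendixson argument can replace a direct analysis of embeddings of trees of infinite CB-rank; in particular, highly homogeneous leafless trees (such as regular trees of infinite degree) are already hard. I therefore expect the plan to yield unconditional theorems only in the two extreme cases---when $T^{\infty}$ is empty, so $T$ is leafy and the rayless case applies, and when $T^{\infty}$ is rigid enough that the only sibling-producing freedom lies in the attachments---together with a conditional reduction in general: if $Sib(S) \in \{1, \infty\}$ for every leafless tree $S$ that can arise as the core of $T$, then $Sib(T) \in \{1, \infty\}$ as well. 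These should be the partial results advertised in the abstract.
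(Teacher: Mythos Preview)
Your strategy misidentifies the obstruction. You write that ``the main obstacle is the leafless core $T^{\infty}$ itself'' and that bounding $Sib(T^{\infty})$ is essentially the conjecture restricted to leafless trees. In fact the paper disposes of this case unconditionally and with a short argument (Theorem~\ref{Leafless}): if a leafless tree has any non-surjective self-embedding, one finds a ray in the complement of the image and manufactures infinitely many pairwise non-isomorphic siblings by attaching finite paths of distinct lengths at the base of that ray. So the hypothesis of your conditional reduction---``if $Sib(S)\in\{1,\infty\}$ for every leafless $S$ arising as a core''---is \emph{always} satisfied. If your reduction were valid, the full conjecture would follow; but the paper itself records that the conjecture is \emph{false}, with locally finite counterexamples due to Tateno and verified in \cite{ALTW}. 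Hence your mix-and-match step, which purports to lift the dichotomy from $T^{\infty}$ and the rayless attachments back up to $T$, cannot work as stated: it is precisely this gluing step, not the analysis of $T^{\infty}$, that fails in general.

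The genuine difficulty lies in controlling infinitely many leafy branches simultaneously and in trees of infinite Cantor--Bendixson rank. The paper's actual partial results (Theorem~\ref{DichT}) all impose one of these restrictions: either some single leafy branch already has infinitely many rooted siblings, or the tree has finite rank together with either finitely many leafy branches or an embedding whose complement contains a ray. Outside these hypotheses the redistribution of attachments along $T^{\infty}$ can produce exactly the finitely-many-siblings phenomena that your transposition-and-shift argument would need to exclude. A minor correction: the rayless case you invoke is due to Bonato and Tardif \cite{BT}, not Laflamme--Pouzet--Sauer (who treated scattered trees).
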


Bonato and Tardif \cite{BT} proved their conjecture for rayless trees and the conjecture was verified for rooted trees by Tyomkyn \cite{TY}.
Laflamme, Pouzet, Sauer \cite{LPS} used Halin's fixed point theorem to prove the tree alternative  conjecture for scattered trees (trees not containing a subdivision of the complete binary tree). Indeed, they verified the conjecture for a more general class of trees namely stable trees. Hamann \cite{HAM} used a result by Laflamme, Pouzet and Sauer to prove that a tree either is non-scattered or has a vertex, an edge, an end or two ends fixed by all its embeddings. He also made use of the monoid of embeddings and deduced that the tree alternative conjecture holds for trees not satisfying two specific structural properties of that monoid. Later Abdi \cite{A} showed that a tree satisfying one of those properties is stable, and therefore the tree alternative conjecture also holds in that case. Tateno \cite{TAT} claimed a counterexample to the Bonato-Tardif conjecture in his thesis. Abdi, Laflamme, Tateno and Woodrow \cite{ALTW} revisited and verified Tateno's claim and provided  locally finite trees having an arbitrary finite number of siblings. 
This is a major development in the programme of understanding siblings of a given tree. While counting the number of siblings provides a good first insight into the siblings of a tree, in particular understanding which trees exactly do satisfy the dichotomy, the equimorphy programme is now ready to move on and focus on the actual structure of those siblings.

In topology, the {\em derived set} of a subset  of a topological space is the result of removing all isolated points from its closure. The concept of derived set was first introduced by Cantor in 1872 and he developed set theory in large part to study derived sets on the real line (see \cite{S}). 
The Cantor-Bendixson rank of a topological space $X$ is obtained by repeatedly defining derived sets  using transfinite recursion (see \cite{W}). For a tree $T$, we introduce a similar notion, namely {\em the Cantor-Bendixson rank} of $T$, by repeatedly removing the leaves and the isolated vertices of $T$ using transfinite recursion. Then, we show that $T$ can be represented as a leafless tree $T^\infty$ with some leafy trees attached to $T^\infty$. This representation of a tree $T$ and also the rank of $T$ will help us to count the siblings of $T$ and obtain partial results towards the tree alternative conjecture.

\section{Leaf Representation of Trees}

In this section we represent a tree $T$ of an arbitrary cardinality as a leafless tree to which some trees with leaves are attached. This representation helps us to determine the sibling number of $T$ in some cases. 
Let $T$ be a tree. The number of neighbours of a vertex $v\in T$ is called the {\em degree} of $v$, denoted by $deg(v)$. A vertex in $T$ is called a \textit{leaf} if its degree is 1 (\cite{D}). If $T'$ is a subtree of $T$ and $v\in T'$, by $deg_{T'}(v)$ we mean the degree of $v$ in $T'$. A tree is called {\em leafless} when it does not have leaf. We define a sequence $T^\alpha$ of subtrees of $T$ ($\alpha$ is an ordinal) by transfinite recursion as follows.

Let $T$ be a tree. Set
\begin{enumerate}
    \item   $T^0=T$;
    \item  If $\alpha$ is an ordinal and $T^\alpha$ is defined, then $T^{\alpha+1}=T^\alpha\setminus \{v\in T^\alpha : deg_{T^\alpha}(v)\leq 1\}$;
    \item  If $\lambda$ is a limit ordinal and $T^\alpha$ is defined for every $\alpha<\lambda$, then $T^\lambda=\bigcap_{\alpha < \lambda} T^\alpha$.
\end{enumerate}

We have the following observation.  

\begin{proposition}
Let $T$ be a tree. For every ordinal $\alpha$, $T^\alpha$ is empty or it is a subtree of $T$.
\end{proposition}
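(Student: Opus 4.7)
The plan is to proceed by transfinite induction on the ordinal $\alpha$. The base case $\alpha = 0$ is immediate since $T^0 = T$ is a subtree of itself. The interesting content lies in the successor and limit steps, each of which reduces to a short connectedness argument.

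For the successor step, assume $T^\alpha$ is empty or a subtree of $T$. If $T^\alpha$ is empty, then $T^{\alpha+1}$ is empty too. Otherwise, I would argue that $T^{\alpha+1}$ is connected whenever it is non-empty; since it is a subgraph of the tree $T$, it contains no cycles and so it is a subtree. The key observation for connectedness is the following: for any two distinct $x, y \in T^{\alpha+1}$, the unique path $P$ from $x$ to $y$ in $T^\alpha$ is entirely contained in $T^{\alpha+1}$. Indeed, each internal vertex of $P$ has at least two distinct neighbours along $P$, and hence satisfies $\deg_{T^\alpha}(v) \geq 2$, so it is not discarded in forming $T^{\alpha+1}$. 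The endpoints $x, y$ belong to $T^{\alpha+1}$ by hypothesis, so $P \subseteq T^{\alpha+1}$.

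For the limit step at a limit ordinal $\lambda$, first note that the family $\{T^\alpha\}_{\alpha < \lambda}$ is $\subseteq$-decreasing, so if some $T^\alpha$ with $\alpha < \lambda$ is empty then $T^\lambda = \emptyset$ and we are done. Otherwise every $T^\alpha$ is a subtree by the induction hypothesis, and I would again show $T^\lambda$ is connected when it is non-empty. The argument relies on the uniqueness of paths in trees: for any $x, y \in T^\lambda$, the unique path $P$ from $x$ to $y$ in $T$ must, for each $\alpha < \lambda$, coincide with the unique such path in $T^\alpha$, and therefore lie entirely inside $T^\alpha$. Intersecting over $\alpha < \lambda$ yields $P \subseteq T^\lambda$.

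The only point that requires any genuine care is the limit step, where one has to avoid the trap of intersecting a decreasing chain of connected subgraphs of a general graph (where connectedness may fail in the limit). The saving feature here is that the ambient object is a tree, so unique paths exist and are preserved all the way along the chain, making the intersection automatically path-connected. Everything else is bookkeeping in the transfinite recursion.
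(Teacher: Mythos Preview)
Your proof is correct and follows essentially the same approach as the paper: transfinite induction, with the successor step handled by noting that the unique $T^\alpha$-path between two points of $T^{\alpha+1}$ survives the deletion of degree-$\leq 1$ vertices, and the limit step handled by invoking uniqueness of paths in trees so that the same path lies in every $T^\alpha$. The only cosmetic difference is that you separate the treatment of endpoints and internal vertices in the successor step, whereas the paper handles all vertices of the path at once.
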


\begin{proof}
We prove it by transfinite induction. Let $\mathcal{S}(\alpha)$ be the statement: \say{$T^\alpha$ is empty or it is a tree}. Clearly $\mathcal{S}(0)$ holds. Assume that $\mathcal{S}(\alpha)$ holds. Since $T^\alpha$ is acyclic, so is $T^{\alpha+1}$. It remains to prove that $T^{\alpha+1}$ is connected. Pick two vertices $u,v$ in $T^{\alpha+1}$. Since $u,v$ are also vertices of $T^\alpha$, they are connected by a path $P$ in $T^\alpha$, that is $P\subseteq T^\alpha$. Note that $deg_{T^\alpha}(x)\geq 2$ for every $x\in P$. Thus, $P\subseteq T^{\alpha+1}$. 
Finally, suppose $\lambda$ is a limit ordinal and $\mathcal{S}(\alpha)$ holds for every $\alpha<\lambda$. If $|T^\lambda|\leq 1$, then $\mathcal{S}(\lambda)$ holds. Otherwise pick two arbitrary vertices $u,v$ in $\bigcap_{\alpha<\lambda} T^\alpha$. Since these vertices belong to every $T^\alpha$, they are connected by a path $P$ in $T^\alpha$. Since $T^\alpha$ is a tree, this path is unique. Thus, the path $P$ is the same for all $\alpha <\lambda$. The path $P$ witnesses that $u, v$ are connected in $T^\lambda$. Further, since every $T^\alpha$ is acyclic, so is $T^\lambda$. 
\end{proof}

Let $T$ be a tree. The \textit{Cantor-Bendixson rank}, or the {\em rank}, of $T$, denoted by $rank(T)$, is the least ordinal $\alpha$ such that $T^{\alpha+1}=T^\alpha$.
If $\alpha$ is the rank of $T$, then we denote $T^\alpha$ by $T^\infty$. Note that $T^\infty$ might be empty (for instance, when $T$ is finite). Suppose $T^\infty\neq\emptyset$ and let $v\in T^\infty$. Then $v\in T^\alpha = T^{\alpha+1}$. Therefore, if $deg_{T^\alpha}(v)=1$, then $v\notin T^{\alpha+1}$, a contradiction. This means that $T^\infty$ is leafless. When $T^\infty\neq\emptyset$, we call a maximal non-trivial subtree of $T$ which is edge-disjoint from $T^\infty$ a {\em leafy branch} of $T$ and denote it by $H_r$ where $r$ is the unique vertex common between $H_r$ and $T^\infty$.  When  $T^\infty$ is empty, $T$ itself is the maximal subtree of $T$ which is edge-disjoint from $T^\infty$. If $T^\infty\neq\emptyset$, then 
we represent $T$ as $T:= \bigoplus_{i\in I}H_{r_i}\oplus^E  T^\infty$ where we use the notation $\oplus^E$ to indicate that the leafy branches $H_{r_i}$ are edge-disjoint from $T^\infty$. Note that the leafy branches $H_{r_i}$ are pairwise disjoint. We call this representation the {\em leaf representation} of $T$. The leaf representation of a tree $T$ is an edge-disjoint decomposition of $T$
into the leafless tree $T^\infty$ and the leafy branches of $T$. 

   
A {\em ray}, resp {\em double ray}, is a one-way, resp two-way, infinite path (\cite{D}). The next proposition implies that the leaf representation is not applicable for rayless trees and for trees with only one end.

\begin{proposition} \label{Uniondoublerays}
Let $\mathfrak{D}$ be the set of all double rays  in $T$. Then, $T^\infty=\bigcup_{Z\in\mathfrak{D}}Z$.
\end{proposition}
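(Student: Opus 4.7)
The plan is to prove the equality by showing each inclusion separately, with the $\supseteq$ direction handled by transfinite induction and the $\subseteq$ direction by an explicit ray-building argument that exploits the fact, established in the paragraph just before the proposition, that $T^\infty$ is leafless.

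For the inclusion $\bigcup_{Z\in\mathfrak{D}}Z \subseteq T^\infty$, I would fix an arbitrary double ray $Z\in\mathfrak{D}$ and prove by transfinite induction on $\alpha$ that $Z\subseteq T^\alpha$. The base case $\alpha=0$ is immediate from $T^0=T$. For the successor step, if $Z\subseteq T^\alpha$ and $v\in Z$, then $v$ has two distinct neighbours along $Z$, both of which lie in $T^\alpha$, so $\deg_{T^\alpha}(v)\geq 2$ and hence $v\in T^{\alpha+1}$. The limit step is just the observation that an intersection preserves set containment. Applying this at $\alpha=\mathrm{rank}(T)$ gives $Z\subseteq T^\infty$.

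For the reverse inclusion, I would take $v\in T^\infty$ and construct a double ray through $v$ that lies entirely in $T^\infty$ (hence in $T$). Since $T^\infty$ is leafless, $\deg_{T^\infty}(v)\geq 2$, so pick two distinct neighbours $u_1,w_1$ of $v$ in $T^\infty$. I would then extend each side inductively: given a finite path $v,u_1,\dots,u_n$ in $T^\infty$, the leafless property applied to $u_n$ provides a neighbour of $u_n$ in $T^\infty$ other than $u_{n-1}$, and acyclicity of $T$ forces this new neighbour to be different from every previously chosen vertex, so the path extends to a ray. Doing the same on the $w$-side and concatenating yields a double ray in $T^\infty$ containing $v$, which is an element of $\mathfrak{D}$.

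The argument is largely routine; the only place where one must be a little careful is justifying the ray extension in the second direction, where one has to invoke both the leafless conclusion for $T^\infty$ (so that extension is always possible) and the acyclicity of $T$ (so that the extending vertex is genuinely new and no cycle is formed when the two sides are glued together at $v$). Once those two properties are on the table, the construction is mechanical and no appeal to choice beyond ordinary dependent choice is needed.
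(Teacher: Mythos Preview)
Your proposal is correct and follows essentially the same approach as the paper: transfinite induction for $\supseteq$ and a ray-extension argument exploiting that $T^\infty$ is leafless and acyclic for $\subseteq$. The only cosmetic difference is that the paper grows the double ray symmetrically as a nested union of paths $P^k=x_{-k}\cdots x_0\cdots x_k$, whereas you build two one-sided rays and concatenate; both presentations are equivalent.
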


\begin{proof}
Let $\alpha$ be the rank of $T$ i.e. $T^\alpha=T^\infty$. If $Z$ is a double ray in $T$, we argue by transfinite induction that $Z\subseteq T^\beta$ for all $\beta<\alpha$. First we have $Z\subseteq T^0$ by assumption. If $Z\subseteq T^\beta$, then since $Z$ has no leaf, $Z\subseteq T^{\beta+1}$ and if $\alpha$ is a limit ordinal and $Z\subseteq T^\beta$ for every $\beta < \alpha$, then $Z\subseteq \bigcap_{\beta < \alpha}T^\beta$. In particular, $Z\subseteq T^\alpha$. The argument is true for every double ray $Z$ in $T$. Therefore, we have $\bigcup_{Z\in\mathfrak{D}}Z\subseteq T^\infty$. 

Take some $x_0\in T^\infty$.  Since $T^\infty$ is leafless, it follows that $deg_{T^\infty}(x_0)\geq 2$. Let $x_{-1}$ and $x_1$ be two neighbours of $x_0$ in $T^\infty$. For each $k\geq 1$, let $x_{-k}, \ldots, x_k\in T^\infty$ be selected such that $P^k=x_{-k}\cdots x_0\cdots x_k$ is a path. Since $T^\infty$ is leafless and has no cycle, $x_{-k}$, resp $x_k$, has at least one neighbour $x_{-k-1}$, resp $x_{k+1}$, in $T^\infty$ other than the vertices of $P^k$. Then $P^{k+1}=x_{-k-1}\cdots x_0 \cdots x_{k+1}$ is a path in $T^\infty$. Set $Z:=\bigcup_{k<\omega} P^k$ which is a double ray containing $x_0$. The double ray $Z$ is a witness to $x_0\in \bigcup_{Z\in\mathfrak{D}}Z$, that is, $T^\infty \subseteq \bigcup_{Z\in\mathfrak{D}}Z$. This completes the proof. 
\end{proof}

Let $T$ be a tree. Two rays $R_1, R_2$ in $T$ are called {\em equivalent}, denoted by $R_1\sim R_2$, if their intersection is also a ray. The equivalence classes of $\sim$ are called the {\it ends} of $T$. The set of ends of $T$ is denoted by $\Omega(T)$ (\cite{HAL}). 

\begin{corollary} \label{Leaflessempty}
Let T be a tree. If $|\Omega(T)|\leq 1$, then $T^\infty=\emptyset$. 
\end{corollary}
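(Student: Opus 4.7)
The plan is to establish the contrapositive: if $T^\infty\neq\emptyset$, then $|\Omega(T)|\geq 2$. The main tool is Proposition \ref{Uniondoublerays}, which identifies $T^\infty$ with the union of all double rays of $T$. So from $T^\infty\neq\emptyset$ I would immediately extract a double ray $Z$ in $T$.

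Writing this double ray as a two-way infinite sequence $\cdots x_{-1}x_0x_1\cdots$, I would split it at any chosen vertex, say $x_0$, into two one-way infinite subpaths $R_1=x_0x_1x_2\cdots$ and $R_2=x_0x_{-1}x_{-2}\cdots$. Each $R_i$ is by construction a ray in $T$. Because $Z$ is an injective two-way infinite path, $R_1\cap R_2=\{x_0\}$, a single vertex, which is not a ray. Therefore $R_1\not\sim R_2$, so $R_1$ and $R_2$ determine two distinct ends of $T$, giving $|\Omega(T)|\geq 2$.

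I do not anticipate any real obstacle here: the work is essentially unpacking the definitions of ray, double ray, and end together with Proposition \ref{Uniondoublerays}. The only point that deserves a sentence of justification is why the two halves $R_1$ and $R_2$ intersect only in $\{x_0\}$, which follows from the fact that a double ray, being a path, has no repeated vertices.
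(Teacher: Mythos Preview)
Your proof is correct and is exactly the argument the paper has in mind: the corollary is stated without proof because it follows immediately from Proposition~\ref{Uniondoublerays} by extracting a double ray and splitting it into two inequivalent rays, just as you do. The only detail you spell out that the paper leaves implicit is why the two halves of the double ray lie in distinct ends, and your justification via $R_1\cap R_2=\{x_0\}$ is the right one.
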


\begin{corollary} \label{Oneendinfrank}
Let T be a tree. If $|\Omega(T)|=1$, then $rank(T)=\infty$. 
\end{corollary}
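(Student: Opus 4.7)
The plan is to combine Corollary \ref{Leaflessempty} with a lower bound on the rank. Corollary \ref{Leaflessempty} already yields $T^\infty = \emptyset$, so if $rank(T) = n$ for some $n < \omega$ then $T^n = T^\infty = \emptyset$; it therefore suffices to exhibit, for each finite $n$, a vertex of $T$ that survives to stage $n$ of the deleafing process.

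Since $|\Omega(T)| \geq 1$, the tree $T$ contains at least one ray $R = v_0 v_1 v_2 \cdots$, and the tail of this ray will be the witness. The main step will be a finite induction establishing the strengthened statement that for every $n < \omega$ and every $m \geq n$, the vertex $v_m$ belongs to $T^n$. The base case $n = 0$ is immediate. For the inductive step, if every $v_m$ with $m \geq n$ lies in $T^n$, then for any $m \geq n+1$ both ray-neighbours $v_{m-1}$ and $v_{m+1}$ lie in $T^n$, giving $deg_{T^n}(v_m) \geq 2$ and hence $v_m \in T^{n+1}$.

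Specialising to $m = n$ will give $v_n \in T^n$ for every $n < \omega$, so no finite $T^n$ can be empty; combined with $T^\infty = \emptyset$ this forces $rank(T)$ to be an infinite ordinal, which is precisely what $rank(T) = \infty$ abbreviates in the paper's notation. The one delicate point is the shape of the induction hypothesis: the degree test demands two surviving neighbours, so I must carry the whole tail $\{v_m : m \geq n\}$ of the ray through the induction rather than just $v_n$ alone; beyond that choice, no serious obstacle is anticipated.
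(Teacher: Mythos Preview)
Your proposal is correct and follows essentially the same approach as the paper: invoke Corollary~\ref{Leaflessempty} to get $T^\infty=\emptyset$, then use the existence of a ray to argue that no finite stage $T^n$ can be empty. The paper's proof states this second step in a single informal sentence (``the existence of a ray in $T$ ensures that there are infinitely many steps to remove all vertices of the ray''), whereas you spell out the tail-of-the-ray induction explicitly; your version is the rigorous unpacking of what the paper asserts.
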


\begin{proof}
By Corollary \ref{Leaflessempty}, $T^\infty=\emptyset$. Since $|\Omega(T)|=1$, the existence of a ray in $T$ ensures that there are infinitely many steps to remove all vertices of the ray meaning that $rank(T)=\infty$. 
\end{proof}

Let $T$ be a tree. 
Corollary \ref{Leaflessempty} implies that if $T^\infty\neq \emptyset$, then $T$ has more than one end and consequently there is a double ray $Z$ in $T$. The double ray lies in $T^\infty$ by Proposition \ref{Uniondoublerays}. Assume that $T^\infty\neq \emptyset$. If some leafy branch $H_r$ of $T$ contains a ray, then it contains a ray $R$ whose starting vertex is $r$. Let $R'$ be a ray in $T^\infty$ whose starting vertex is $r$. Then, the tree consisting of $R\cup R'$ is a double ray with infinitely many vertices in $H_r$, a contradiction because the leafy branches of $T$ are edge-disjoint from $T^\infty$. Thus, for each leafy branch $H_r$ of $T$ we have $|\Omega(H_r)|=0$ and by Corollary \ref{Leaflessempty},  $H_r^\infty=\emptyset$.

The following lemma provides the connection between the rank of $T$ and the ranks of its leafy branches. 

\begin{lemma} \label{Boundonrank} 
Let T be a tree. T is of finite rank if and only if there is a finite bound on the ranks of its leafy branches. 
\end{lemma}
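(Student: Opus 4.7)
The plan is to couple the $T$-peeling with the intrinsic peeling of each leafy branch $H_r$ via an anchored peeling that mirrors the behaviour of $T^\alpha$ inside $H_r$. Since $r \in T^\infty$ persists in every $T^\alpha$, and every $v \in H_r \setminus \{r\}$ has all its $T$-neighbours inside $H_r$ (the leafy branches being pairwise disjoint), a transfinite induction yields $T^\alpha \cap H_r = H_r^{[\alpha]}$, where $H_r^{[\alpha]}$ is defined by the same recursion as $H_r^\alpha$ except that $r$ is never removed. A parallel induction also gives the inclusion $H_r^\alpha \subseteq H_r^{[\alpha]}$ for every ordinal $\alpha$.

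For the forward direction, if $rank(T)=n<\omega$ then $T^n=T^\infty$, so $T^n \cap H_r = \{r\}$ for every leafy branch. The identification above gives $H_r^{[n]}=\{r\}$, and the inclusion then forces $H_r^n \subseteq \{r\}$. Hence $H_r^{n+1} = \emptyset$, and $rank(H_r) \leq n+1$ uniformly in $r$.

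For the backward direction, suppose every leafy branch has rank at most $n<\omega$. Since each $H_r$ is rayless (as noted before the lemma), finite rank forces a bounded diameter. Specifically, $H_r^{n-1}$ is a subtree in which every vertex has degree at most $1$, so it has at most two vertices; moreover, one verifies by induction that any vertex $v \in H_r$ with regular peel-step $t < n$ has a unique neighbour in $H_r^{t-1}$ of strictly greater peel-step, and iterating produces a path of length at most $n-1$ from $v$ to $H_r^{n-1}$. Combining these bounds yields $\operatorname{diam}(H_r) \leq 2n-1$. A second induction, viewing $H_r$ as rooted at $r$, shows that a vertex $v \neq r$ is removed from $H_r^{[\alpha]}$ only after all its descendants have been, so the anchored peel-step of $v$ equals the height of its subtree plus one; hence the anchored rank at $r$ coincides with the eccentricity of $r$ in $H_r$, which is at most the diameter, hence at most $2n-1$. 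Consequently $T^{2n-1} \cap H_r = \{r\}$ for every $r$, so $T^{2n-1} = T^\infty$ and $rank(T) \leq 2n-1 < \omega$.

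The main obstacle lies in the second induction in the backward direction: to identify the anchored peel-step with the subtree height, one must rule out that the parent $p(v)$ is removed before $v$ itself. This is handled by first observing that $H_r^{[\alpha]}$ is connected at every stage (by an induction analogous to the one in the first proposition of this section). Then whenever $v \in H_r^{[\alpha]}$, the unique $H_r^{[\alpha]}$-path from $r$ to $v$ passes through $p(v)$ and its ancestors, so both $p(v)$ and the grandparent of $v$ (or $r$ itself, if $p(v)$ is a neighbour of $r$) remain in $H_r^{[\alpha]}$. This gives $p(v)$ degree at least two in $H_r^{[\alpha]}$ as long as $v$ is present, forcing $p(v)$ to survive strictly past $v$ and validating the inductive formula.
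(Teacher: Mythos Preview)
Your argument is sound and considerably more careful than the paper's own proof, which dispatches each direction in two or three sentences. The paper's forward direction simply asserts that a leafy branch of rank exceeding $n=rank(T)$ would leave a leaf in $T^n$; its backward direction asserts (without justification, and with an incorrect constant) that a uniform rank bound $M$ on the leafy branches forces every leaf of $T$ to lie within distance $M$ of $T^\infty$, whence $rank(T)\le M$. Your anchored peeling $H_r^{[\alpha]}$ and the identity $T^\alpha\cap H_r=H_r^{[\alpha]}$ make explicit the mechanism the paper only gestures at, and your eccentricity/height analysis is exactly what is needed to turn the paper's distance claim into an honest bound. What your approach buys is rigour and the correct constants ($rank(H_r)\le n+1$ in the forward direction, $rank(T)\le 2n-1$ in the backward one); what the paper's buys is brevity.

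There is one small slip in your diameter step: you run the increasing-peel-step argument with the uniform bound $n$ rather than with the individual rank $m_r:=rank(H_r)\le n$. When $m_r<n$ the set $H_r^{n-1}$ may already be empty, and the claim that every vertex of regular peel-step $t<n$ has a neighbour of strictly larger peel-step can fail (take $H_r$ a single edge and $n\ge 2$: both endpoints have peel-step $1$). The repair is immediate---argue with $m_r$ in place of $n$, so that $H_r^{t}\neq\emptyset$ for $t<m_r$ and connectedness of $H_r^{t-1}$ forces the desired neighbour; this yields $\operatorname{diam}(H_r)\le 2m_r-1\le 2n-1$, and the rest of your proof goes through unchanged.
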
 

\begin{proof}
($\Rightarrow$) Assume that $rank(T)=n<\infty$. It follows that $T^m=T^n$ for each $m\geq n$. If  there is no finite bound on the $rank(H_r)$ where $r\in T^\infty$, then there is a leafy branch $H_s$ of $T$ with $rank(H_s) > n$. This implies that there is a leaf in $T^n$ meaning that $T^{n+1}\neq T^n$, a contradiction. 

($\Leftarrow$) Suppose that $M<\infty$ is a bound on the $rank(H_r)$ where $r\in T^\infty$. Then, the maximum distance of a leaf of $T$ from $T^\infty$ is $M$. This means that $T^{n+1}=T^n$ for each $n\geq M$. Consequently, $rank(T)\leq M$. 
\end{proof}

\section{Siblings of Trees by means of Leafy Branches}

In this section we use the leaf representation of a tree $T$ to count its siblings. A \textit{rooted} tree $(T,r)$ is a tree $T$ with a special vertex $r$, called the root. Two rooted trees $(T,r)$ and $(T',r')$ are siblings if there are embeddings $f:(T,r)\to (T',r')$ and $g:(T',r')\to (T,r)$ such that $f(r)=r'$ and $g(r')=r$. It follows that all embeddings of a rooted tree $(T,r)$ fix the root (\cite{TY}).

\begin{lemma} \label{Leaftoleafless}
Let T and S be trees and $f:T\to S$ an embedding.
\begin{enumerate}
    \item $f(T^\infty)\subseteq S^\infty$. Thus, if $T\approx S$, then $T^\infty\approx S^\infty$. Moreover, if for some leafy branch $H_r$ of T, $f(H_r\setminus\{r\})\cap S^\infty\neq\emptyset$, then there is a ray in $S\setminus f(T)$. Consequently, if $f(T^\infty)\subset S^\infty$, then there is a ray in $S\setminus f(T)$.  
    \item If f is an isomorphism, then $f(T^\infty)=S^\infty$ and for each leafy branch $H_t$ of T, $f((H_t,t))=(H_s,s)$ for some leafy branch $H_s$ of S. 
\end{enumerate}
\end{lemma}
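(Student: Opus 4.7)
For the first assertion, I would prove $f(T^\alpha)\subseteq S^\alpha$ for every ordinal $\alpha$ by transfinite induction. The base case is trivial. For the successor step, if $v\in T^{\alpha+1}$ then $v$ has two distinct neighbours $u_1,u_2$ in $T^\alpha$; since $f$ preserves adjacency and is injective, $f(u_1),f(u_2)$ are distinct neighbours of $f(v)$ in $S$, and by the inductive hypothesis they lie in $S^\alpha$, forcing $f(v)\in S^{\alpha+1}$. The limit step is immediate from $f\!\left(\bigcap_{\alpha<\lambda}T^\alpha\right)\subseteq\bigcap_{\alpha<\lambda}f(T^\alpha)$. Taking $\alpha$ large enough that both $T^\alpha=T^\infty$ and $S^\alpha=S^\infty$ yields $f(T^\infty)\subseteq S^\infty$, and the equimorphy version follows by applying the inclusion in both directions.

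For the \emph{moreover} clause, fix $u\in H_r\setminus\{r\}$ with $f(u)\in S^\infty$. Let $T_u$ be the component of $T\setminus\{r\}$ containing $u$; the edge-disjoint decomposition of $T$ ensures $T_u\subseteq H_r\setminus\{r\}$, so $f(T_u)$ is rayless. Let $S_u$ be the component of $S\setminus\{f(r)\}$ containing $f(u)$, noting that $f(r)\in S^\infty$ by the first assertion. Uniqueness of paths in $T$ and $S$ yields $S_u\cap f(T)=f(T_u)$, since any $f(w)\in S_u$ with $w\notin T_u\cup\{r\}$ would be forced to have its path to $f(u)$ pass through $f(r)$. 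Since $S^\infty$ is leafless, I would inductively build a ray $R=f(u),w_1,w_2,\ldots$ in $S^\infty\cap S_u$, at each step choosing a neighbour of the current vertex that is not on the path back to $f(r)$. If $R\cap f(T_u)$ were infinite, the sub-path of $R$ between any two of its points in $f(T_u)$ would be forced, by uniqueness of paths, to lie entirely in $f(T_u)$, producing a ray in the rayless $f(T_u)$. Hence $R\cap f(T)$ is finite and a tail of $R$ is a ray in $S\setminus f(T)$.

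For the \emph{consequently} clause, I would split into two cases. If some $u\in T\setminus T^\infty$ satisfies $f(u)\in S^\infty$, then $u$ lies in $H_r\setminus\{r\}$ for some leafy branch and the previous clause applies directly. Otherwise $f(T)\cap S^\infty=f(T^\infty)\subsetneq S^\infty$, so pick a component $C$ of $S^\infty\setminus f(T^\infty)$. A short acyclicity argument in $S^\infty$ forces $C$ to attach to $f(T^\infty)$ through a single vertex $w$, and each $c\in C$ has $deg_{S^\infty}(c)\geq 2$ with at most one neighbour in $f(T^\infty)$. Consequently $C\cup\{w\}$ is a subtree of $S^\infty$ whose only possible leaf is $w$; since every finite tree on at least two vertices has at least two leaves, $C$ must be infinite. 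Stepping inductively from any $c_0\in C$ to a neighbour in $C$ produces a ray in $C\subseteq S^\infty\setminus f(T^\infty)$, which by the case hypothesis avoids $f(T)$ entirely.

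For part (2), applying the first assertion to both $f$ and $f^{-1}$ gives $f(T^\infty)=S^\infty$. For a leafy branch $H_t$ of $T$, the image $f(H_t)$ is a subtree of $S$ meeting $S^\infty$ exactly at $f(t)$ and edge-disjoint from $S^\infty$; any strict enlargement of $f(H_t)$ with these properties would pull back through $f^{-1}$ to a strict enlargement of $H_t$ still edge-disjoint from $T^\infty$, contradicting maximality. Hence $f((H_t,t))=(H_{f(t)},f(t))$. Throughout, the most delicate step is the \emph{moreover} clause: coupling the decomposition $S_u\cap f(T)=f(T_u)$ with the rayless-intersection argument is what actually forces a tail of $R$ outside $f(T)$, and the second case of the \emph{consequently} clause simply reuses the same spirit of argument inside $S^\infty$.
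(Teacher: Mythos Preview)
Your proof is correct, and it takes a genuinely different route from the paper in two places worth noting.

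For the inclusion $f(T^\infty)\subseteq S^\infty$, the paper invokes Proposition~\ref{Uniondoublerays}: a vertex $x\in T^\infty$ lies on a double ray $Z$, and $f(Z)$ is then a double ray in $S$, so $f(x)\in S^\infty$. Your transfinite induction on the derivatives $T^\alpha\mapsto S^\alpha$ is more elementary and self-contained; it does not rely on the double-ray characterisation at all, and would go through verbatim for any class of structures where the derivative is defined by removing low-degree points. The paper's argument, on the other hand, is one line once Proposition~\ref{Uniondoublerays} is available and makes the geometric content more visible.

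For the \emph{moreover} clause the paper is quite terse: it observes that $x\in H_r\setminus\{r\}$ has only one branch containing a ray while $f(x)\in S^\infty$ has at least two, and declares that this forces a ray in $S\setminus f(T)$. Your argument via the identity $S_u\cap f(T)=f(T_u)$ and the rayless-intersection trick makes this step rigorous and is actually the cleaner way to justify it. The \emph{consequently} clause and part~(2) are handled in essentially the same spirit in both proofs; your pullback argument for maximality of $f(H_t)$ is a bit crisper than the paper's case analysis on whether $f(t)$ is the root of a leafy branch of $S$.
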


\begin{proof} 
(1) If $T^\infty=\emptyset$, then clearly we have $f(T^\infty)\subseteq S^\infty$. Assume that $T^\infty\neq\emptyset$ and that for some $x\in T^\infty$ we have $f(x)\in H_s\setminus\{s\}$ where $H_s$ is a leafy branch of $S$. Since  $x\in T^\infty$, by Proposition \ref{Uniondoublerays}, $T$ has a double ray $Z$ containing $x$. Then, $f(Z)$ is a double ray in $S^\infty$ containing $f(x)$. Since $f(x)\in H_s\setminus \{s\}$, it follows that $S^\infty$ and $H_s$ have at least one common edge which is not possible. Therefore, $f(T^\infty)\subseteq S^\infty$. 

Assume that for some $x\in H_r\setminus \{r\}$ where $H_r$ is a leafy branch of $T$, $f(x)\in S^\infty$. Since $x\neq r$, $x$ is not a vertex of a double ray in $T$. Therefore, $x$ has only one branch containing a ray while $f(x)$ has at least two branches each of which contains a ray because $f(x)\in S^\infty$ and by Proposition \ref{Uniondoublerays}, $S^\infty$ consists of all double rays of $S$. Hence, there is a ray in $S\setminus f(T)$. 
Now assume that $f(T^\infty)\subset S^\infty$. Pick some $t_1\in S^\infty\setminus f(T^\infty)$. For each $n\geq 1$, since $S^\infty$ is leafless and acyclic, $t_n$ has a neighbour $t_{n+1}\in S^\infty\setminus f(T^\infty)$ such that $t_{n+1}\neq t_m$ for each $m\leq n$. Set $R:=t_1t_2\cdots$ which is a ray in $S^\infty\setminus f(T^\infty)$. If no vertex of $R$ is in the image of $f$, then $R\subset S\setminus f(T)$. If for some $n$, $t_n=f(x)$ where $x\in T\setminus T^\infty$, then by the argument above, there is a ray  (indeed, a tail of $R)$ in $S\setminus f(T)$. 

(2) Suppose that $f:T\to S$ is an isomorphism. By (1), it follows that $f(T^\infty)=S^\infty$. Now let $H_t$ be a leafy branch of $T$. If $f(t)\in S^\infty\setminus \bigcup_{s\in S^\infty} H_s$, then some neighbour of $t$ in $H_t$ is mapped by $f$ to some element of $S^\infty$. By (1), there is a ray in $S^\infty$ with no preimage under $f$, a contradiction because $f$ is an isomorphism. Therefore, $f(H_t,t)=(H_s,s)$ for some leafy branch $H_s$ of $S$.  
\end{proof} 

As we will see later, when $T\approx S$, then it is not necessarily the case that $T^\infty \cong S^\infty$.

\begin{theorem} \label{Leafless}
If $T$ is a leafless tree, then $Sib(T)=1$ or $\infty$. 
\end{theorem}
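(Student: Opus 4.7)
The plan is to establish a dichotomy on the monoid $Emb(T)$: either every self-embedding of $T$ is surjective (hence an automorphism), in which case $Sib(T) = 1$; or $T$ admits a proper self-embedding, in which case $Sib(T) = \infty$ via an explicit family of siblings.

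Suppose first that every self-embedding of $T$ is surjective. Given any sibling $S$ of $T$ with embeddings $f : T \to S$ and $g : S \to T$, the composition $g \circ f$ is a self-embedding of $T$, hence by hypothesis an automorphism. For each $t \in T$ there exists $s \in T$ with $g(f(s)) = t$, so $g$ is surjective; combined with its injectivity, this makes $g : S \to T$ an isomorphism. Consequently $f = g^{-1} \circ (g \circ f)$ is an isomorphism too, so $S \cong T$ and $Sib(T) = 1$.

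Suppose next that $T$ admits a proper self-embedding $h : T \to T$. Since $T$ is leafless, $T^\infty = T$, so $h(T^\infty) = h(T) \subsetneq T^\infty$, and Lemma \ref{Leaftoleafless} produces a ray $R$ in $T \setminus h(T)$. Let $C$ be the connected component of $T \setminus h(T)$ containing $R$; because $T$ is a tree, $C$ is joined to $h(T)$ through a unique edge $\{y_0, w\}$ with $y_0 \in C$ and $w \in h(T)$. Set $v = h^{-1}(w)$. For each integer $n \geq 1$, let $T_n$ be the tree obtained from $T$ by attaching a pendant path of length $n$ at $v$. The inclusion gives $T \hookrightarrow T_n$, while $T_n \hookrightarrow T$ is obtained by sending the $T$-part via $h$ (so $v \mapsto w$) and routing the pendant path along a length-$n$ path in $C$ starting at $y_0$ (such a path exists because $C$ is a subtree containing the ray $R$, hence every vertex of $C$ lies on a ray in $C$). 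Thus each $T_n$ is a sibling of $T$. Because $T$ is leafless, $T_n$ has exactly one leaf, namely the endpoint of the attached pendant path, and the distance from this leaf to the nearest vertex of $T_n$ of degree $\neq 2$ equals $n$; this distance is an isomorphism invariant that distinguishes $T_n$ from $T_m$ whenever $n \neq m$. Hence $Sib(T) = \infty$.

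The main technical point is in the second case: ensuring that the pendant-path embedding stays within $C$ (and so avoids $h(T)$), which follows from $C$ being a connected component of $T \setminus h(T)$; and verifying that the length-$n$ invariant is well-defined, which relies on the leaflessness of $T$ to guarantee each $T_n$ has a unique leaf and that the path back from it to the first vertex of degree $\neq 2$ is unambiguous.
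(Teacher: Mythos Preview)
Your proof is correct and follows essentially the same route as the paper's: split on whether $T$ has a non-surjective self-embedding, invoke Lemma~\ref{Leaftoleafless}(1) to locate a ray in the complement, and build pairwise non-isomorphic siblings $T_n$ by adjoining pendant paths of length $n$. The paper realises its $T_n$ internally as $f(T)\cup P_n\subseteq T$ and separates them via Lemma~\ref{Leaftoleafless}(2) (each $T_n$ has a unique leafy branch, a path of length $n$), whereas you attach the paths externally and separate the $T_n$ by a direct invariant; up to the identification $h:T\to h(T)$ these constructions coincide.

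One small wording slip: your invariant ``distance from the unique leaf to the nearest vertex of degree $\neq 2$'' is literally $0$, since the leaf itself has degree $1\neq 2$. What you want is the nearest vertex of degree $\geq 3$ (equivalently, the first branching vertex along the unique path out of the leaf), which is $v$ at distance exactly $n$ because $\deg_{T_n}(v)=\deg_T(v)+1\geq 3$. With that correction the argument goes through.
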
   

\begin{proof} 
Note that $T^\infty=T$. Suppose that there is a non-surjective embedding $f$ of $T$. Then $S:=f(T)$ is a proper and leafless subtree of $T$. By Lemma \ref{Leaftoleafless} (1) there is a ray $R=t_1t_2\ldots$ in $T\setminus S$. Without loss of generality assume that $t_1$ is a neighbour of some $y\in S$. For each $n\geq 1$, let $T_n$ be the subtree of $T$ with vertex set $S\cup P_n$ where $P_n:=t_1\cdots t_n$. We have $T_n^\infty=S$ for each $n$. Therefore, $T_n=S\oplus^E H_y^n$ where $H_y^n$ consists of $y$ and $P_n$. Moreover, $T\hookrightarrow S\hookrightarrow T_n\hookrightarrow T$ meaning that $T_n \approx T$ for each $n\geq 1$. Since $T$ is leafless, $T\ncong T_n$ for every $n$. Now suppose that for $m< n$, $f: T_n \to T_m$ is an isomorphism. Then by Lemma \ref{Leaftoleafless} (2), $f(T^\infty_n)=T^\infty_m$ and $f((H_y^n,y))=(H_y^m,y)$ because $H_y^n$ and $H_y^m$ are the only leafy branches of $T_n$ and $T_m$, respectively. But this is not possible since $H_y^n$ and $H_y^m$ are paths of length $n$ and $m$, respectively. It follows that $T_n \ncong T_m$ when $n\neq m$. Hence, $Sib(T)=\infty$. 

In particular, if $Sib(T)>1$, then there is a non-isomorphic sibling $S\subset T$ of $T$ and consequently there exists a non-surjective embedding $f$ of $T$ with $f(T)\subseteq S\subset T$. The argument above implies that $Sib(T)=\infty$. 
\end{proof}

The {\em complete binary tree} is the tree in which one vertex is of degree 2 and all others have degree 3. Let $T$ be the complete binary tree and let $r$ be the unique vertex of $T$ with degree 2. We note that $T$ is leafless, that is $T^\infty=T$. 
Let $r_1$ be one of the two neighbours of $r$ and $R=rr_1r_2\cdots$ a ray in $T$ starting at $r$ and containing $r_1$. Now let $S$ be the tree obtained from $T$ by replacing the maximal subtree of $T$ containing $r_1$ and edge-disjoint from $R$ with a trivial tree. We have $deg_S(r_1)=2$. It can be easily shown that $T\approx S$.  Moreover, $S$ has no leaf meaning that $S^\infty=S$. Further, there is only one vertex of degree 2 in $T$ while in $S$ there are precisely two such vertices, establishing that $T^\infty \ncong S^\infty$. Consequently, $T\ncong S$ and by Lemma \ref{Leafless} we have $Sib(T)=\infty$.

A leafy branch $H_r$ of a tree $T$ might have infinite sibling number as a rooted tree $(H_r,r)$. As a matter of fact, consider the rooted tree $(T,r)$ consisting of a vertex $r$ and countably many paths of length 2 attached to $r$ which has infinite sibling number. The next lemma shows that the existence of such a leafy branch of $T$ is a sufficient condition to conclude that the sibling number of $T$ is infinite. 

\begin{lemma} \label{Rootedcomponentwithinfinite}
Let $T$ be a tree. If  some leafy branch $H_r$ of $T$ has infinitely many siblings as a tree rooted at r, then $Sib(T)=\infty$. 
\end{lemma}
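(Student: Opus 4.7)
The plan is to substitute the leafy branch $H_r$ with each of its rooted siblings, obtain a family of siblings of $T$, and then distinguish infinitely many of them using the multiset of rooted isomorphism types of leafy branches.

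First, I fix pairwise rooted-non-isomorphic rooted siblings $(H^{(k)},r)$, $k\in\mathbb{N}$, of $(H_r,r)$, with $(H^{(1)},r)=(H_r,r)$. As noted in the paragraph preceding Lemma~\ref{Boundonrank}, any leafy branch satisfies $|\Omega(\cdot)|=0$, so $H_r$ is rayless; and since embeddings preserve rays, each $H^{(k)}$ is rayless as well. For each $k$ define $T^{(k)}$ to be the tree obtained from $T$ by removing the non-root vertices of $H_r$ and attaching $H^{(k)}$ at $r$. Because $H^{(k)}$ has no ray, gluing it at the single vertex $r\in T^\infty$ creates no new double ray, so by Proposition~\ref{Uniondoublerays} we have $(T^{(k)})^\infty=T^\infty$ and the leafy branches of $T^{(k)}$ are exactly those of $T$ with $H_r$ replaced by $H^{(k)}$. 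Mutual rooted embeddings $(H_r,r)\leftrightarrow(H^{(k)},r)$ extend by the identity on $T\setminus(H_r\setminus\{r\})$ to mutual embeddings $T\leftrightarrow T^{(k)}$, hence $T^{(k)}\approx T$.

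By Lemma~\ref{Leaftoleafless}(2), any isomorphism between trees sends leafy branches to rooted-isomorphic leafy branches, so the multiset $\mathcal{M}(S)$ of rooted isomorphism types of the leafy branches of a tree $S$ is an isomorphism invariant. Write $\tau_k$ for the rooted isomorphism type of $H^{(k)}$ and $m_k$ for the multiplicity of $\tau_k$ in $\mathcal{M}(T)$. Then $\mathcal{M}(T^{(k)})$ differs from $\mathcal{M}(T)$ only by $-1$ at $\tau_1$ and $+1$ at $\tau_k$. For $k\neq j$ the multiplicity of $\tau_k$ in $\mathcal{M}(T^{(k)})$ equals $m_k+1$ whereas in $\mathcal{M}(T^{(j)})$ it equals $m_k$, and these disagree whenever $m_k<\infty$, forcing $T^{(k)}\not\cong T^{(j)}$. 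Thus if infinitely many $k$ satisfy $m_k<\infty$, the conclusion is immediate.

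Otherwise, all but finitely many rooted sibling types of $(H_r,r)$ appear with infinite multiplicity among the leafy branches of $T$; in particular $T$ contains infinitely many leafy branches realizing infinitely many pairwise distinct rooted types in the rooted equimorphy class of $(H_r,r)$, attached at distinct roots $u_1,u_2,\ldots\in T^\infty$. Each leafy branch $H_{u_\ell}$ then has infinitely many rooted siblings too, and the plan is to iterate the substitution construction at the $u_\ell$'s: either replace a single $H_{u_\ell}$ by a rooted sibling of a type with finite multiplicity, or swap types simultaneously at several $u_\ell$ while tracking a chosen finite-multiplicity coordinate of $\mathcal{M}$, to obtain infinitely many pairwise non-isomorphic siblings of $T$. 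The main obstacle is the extreme form of this second case, in which every rooted sibling type of $(H_r,r)$ appears with infinite multiplicity in $\mathcal{M}(T)$; then the multiset invariant alone fails and one must either produce a finer invariant or directly construct siblings whose distinguishing features cannot be undone by any element of $\mathrm{Aut}(T^\infty)$, possibly using Tyomkyn's rooted sibling theorem applied to $(T,r)$.
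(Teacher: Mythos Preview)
Your argument is incomplete: you yourself identify the obstacle, namely the case in which every rooted sibling type of $(H_r,r)$ occurs among the leafy branches of $T$ with infinite multiplicity, and you do not resolve it. Replacing a \emph{single} branch and tracking multiplicities in $\mathcal{M}(T)$ genuinely cannot work here, since the operation $-1$ at $\tau_1$, $+1$ at $\tau_k$ is invisible when both entries are infinite; iterating at further roots $u_\ell$ runs into the same wall, and the appeal to Tyomkyn's theorem for $(T,r)$ is only a hope, not an argument.

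The paper closes this gap with one clean trick that bypasses multiplicities altogether: instead of replacing just $H_r$, replace \emph{every} leafy branch $(H_t,t)$ in the rooted equimorphy class of $(H_r,r)$ by a copy of the same $(H^{(k)},r)$, obtaining $T_k$. Branch-by-branch rooted embeddings (identity elsewhere) give $T_k\approx T$. Now in $T_k$ the leafy branches rooted-equimorphic to $(H_r,r)$ all have the \emph{same} rooted isomorphism type, namely that of $H^{(k)}$. If $f:T_m\to T_n$ were an isomorphism, Lemma~\ref{Leaftoleafless}(2) sends any such branch of $T_m$ to a leafy branch of $T_n$ rooted-isomorphic to it, hence rooted-equimorphic to $(H_r,r)$, hence a copy of $H^{(n)}$; this forces $H^{(m)}\cong H^{(n)}$, a contradiction. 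No multiplicity count is needed, and the infinite-multiplicity case disappears.
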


\begin{proof}
Suppose that $H_r$ is a leafy branch of $T$ with $Sib((H_r,r))=\infty$ and let $\{(H_{r_n},r_n)\}_{n< \omega}$  be a family of rooted trees which are pairwise non-isomorphic siblings of $(H_r,r)$. For every $n<\omega$, let $T_n$ be the tree obtained from $T$ by replacing each $(H_t,t)\approx (H_r,r)$ with a copy of $(H_{r_n},r_n)$ where $H_t$ is a leafy branch of $T$. Then the resulting trees $T_n$ are siblings of $T$. Now assume that for some $m<n<\omega$, $T_m\cong T_n$ by some isomorphism $f$. Let $H_t$ be a leafy branch of $T_m$ such that $(H_t,t)$ is equimorphic to $(H_r,r)$. Then $(H_t,t)=(H_{r_m},r_m)$. By Lemma \ref{Leaftoleafless} (2),  $f(T_m^\infty) = T_n^\infty$ and $f((H_{r_m},r_m))=(H_s,s)$ for some leafy branch $H_s$ of $T_n$. We have $(H_s,s)\cong (H_{r_m},r_m)\approx (H_r,r)$. Thus, $(H_s,s)=(H_{r_n},r_n)$ because $H_s$ is a leafy branch of $T_n$. It follows that $(H_{r_m},r_m)\cong (H_{r_n},r_n)$, a contradiction. Hence, the family $\{T_n\}_{n<\omega}$ is a witness to $Sib(T)=\infty$.  
\end{proof} 

Define a {\em comb} to be a ray with infinitely many disjoint non-trivial paths of finite length attached to it \cite{TY}.
Tyomkyn \cite{TY} proved that if a locally finite tree $T$ has an embedding $f$ such that $T\setminus f(T)$ contains a comb, then $T$ has infinitely many siblings. For an arbitrary tree $T$,
we get a similar result by posing some restrictions. 

\begin{lemma} \label{Infiniteleaf}
Let $T$ be a tree of finite rank. If for some embedding $f$ of $T$, $T\setminus f(T)$ contains a ray, then $Sib(T)=\infty$. 
\end{lemma}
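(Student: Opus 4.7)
The plan is to adapt the construction used in the proof of Theorem \ref{Leafless}: build countably many siblings $T_n$ of $T$ by attaching longer and longer initial segments of the ray to the image $f(T)$, and then distinguish them using Cantor--Bendixson rank.

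My first step is to put the ray in a convenient form. Since $T\setminus f(T)$ contains a ray, $T$ itself has at least one end; the finite-rank hypothesis together with Corollary \ref{Oneendinfrank} rules out $|\Omega(T)|=1$, so $|\Omega(T)|\geq 2$ and $T^\infty\neq\emptyset$ by Proposition \ref{Uniondoublerays}. Because every leafy branch of $T$ is rayless (as recorded in the discussion after Corollary \ref{Oneendinfrank}), any ray of $T$ has a tail inside $T^\infty$. Taking such a tail of the given ray, and then prepending the shortest $T$-path from $f(T)$ to its closest point, I may relabel and assume that the ray has the form $R=y,t_1,t_2,\ldots$ with $y\in f(T)$, $y$ adjacent to $t_1$, and $t_i\in T\setminus f(T)$ for every $i\geq 1$ (by minimality of the prepended path, its interior vertices also lie outside $f(T)$).

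For each $n\geq 1$ I then set $T_n:=f(T)\cup\{t_1,\ldots,t_n\}$, viewed as a subtree of $T$. The inclusion $T_n\subseteq T$ and the isomorphism $T\cong f(T)\subseteq T_n$ provide embeddings in both directions, so $T\approx T_n$. The core remaining task is to show that infinitely many of the $T_n$ are pairwise non-isomorphic, and I will do this by showing that $rank(T_n)\to\infty$. Since the appended path is finite and meets $f(T)$ only at $y$, no double ray of $T_n$ can enter $\{t_1,\ldots,t_n\}$; Proposition \ref{Uniondoublerays} then yields $T_n^\infty=f(T)^\infty=f(T^\infty)$. The leafy branch $B_n$ of $T_n$ attached at $y$ contains the full path $y,t_1,\ldots,t_n$, and an inductive peeling argument along that path shows that for each $k=1,\ldots,n-1$ the vertex $t_{n-k+1}$ is a leaf of $B_n^{k-1}$ and is deleted exactly at step $k$, independently of the structure of $B_n$ on the $f(T)$-side of $y$. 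Hence $t_{\lceil n/2\rceil}$ survives at least $\lfloor n/2\rfloor$ rounds, giving $rank(B_n)\geq\lfloor n/2\rfloor+1$. The inequality $rank(T_n)\geq rank(B_n)$, implicit in the proof of Lemma \ref{Boundonrank}, then forces $rank(T_n)\to\infty$, while a straightforward upper bound (finite rank of $f(T)$ plus finitely many added vertices) keeps each $rank(T_n)$ finite. Since rank is an isomorphism invariant taking infinitely many values on $\{T_n\}_{n\geq 1}$, infinitely many of the $T_n$ are pairwise non-isomorphic, giving $Sib(T)=\infty$.

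The main obstacle I anticipate is the lower bound on $rank(B_n)$: one must rule out the possibility that peeling the part of $B_n$ lying inside $f(T)$ collapses $y$ early and produces a ``second wavefront'' from the $y$-side that overtakes the $t_n$-wavefront before it reaches the middle of the path. A short two-sided comparison of peeling speeds along $P_n$ handles this, but it is the step where care is needed. Everything else --- the existence of $T^\infty$, the reorganization of the ray, the equimorphism $T\approx T_n$, and the identification of $T_n^\infty$ --- follows routinely from the results already established in the excerpt.
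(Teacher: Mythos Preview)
Your proof is correct and follows essentially the same construction as the paper: build siblings $T_n$ by gluing longer and longer initial segments of the ray onto (a copy of) $T$. Your $T_n=f(T)\cup\{t_1,\dots,t_n\}$ is isomorphic to the paper's $T_n$, which instead attaches a fresh path of length $n$ to $T$ at the $f$-preimage $x$ of $y$ and embeds it back into $T$ by extending $f$ along $R$.

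The one genuine difference is the invariant used to separate the $T_n$. The paper appeals to Lemma~\ref{Leaftoleafless}(2): an isomorphism $T_n\to T_m$ must carry $T_n^\infty$ onto $T_m^\infty$ and match leafy branches, so the distance of the farthest leaf from $T^\infty$ is preserved; that distance is $n+k$ in $T_n$ and $m+k$ in $T_m$, giving the contradiction directly. You instead show $rank(T_n)\to\infty$. Two small comments on your version. First, the assertion that $t_{n-k+1}$ is deleted \emph{exactly} at step $k$ for all $k\le n-1$ is not literally true once the $y$-side wavefront interferes; what is true (and what you actually need) is the path lower bound: along any path $v_0v_1\cdots v_m$ in a tree $S$ one has $v_i\in S^{\min(i,m-i)}$, which already gives $rank(B_n)\ge\lfloor n/2\rfloor$. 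Second, the inequality $rank(T_n)\ge rank(B_n)$ follows cleanly from the monotonicity $(B_n)^j\subseteq (T_n)^j$ (proved by a one-line induction on $j$), up to a harmless off-by-one when $(B_n)^j=\{s\}$; either way $rank(T_n)\to\infty$ and the conclusion stands. In fact you can avoid the two-wavefront discussion entirely by working in $T_n$ rather than $B_n$: since $y$ lies on a ray into $f(T)^\infty$, there is a ray $t_n,t_{n-1},\dots,t_1,y,\dots$ in $T_n$, and the path bound gives $t_1\in T_n^{\,n-1}\setminus T_n^\infty$, hence $rank(T_n)\ge n$.
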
 

\begin{proof}
Assume that $f$ is an embedding of $T$ such that $T\setminus f(T)$ contains a ray $R=r_1r_2\cdots$ where $r_1$ has a neighbour $y$ in $f(T)$. Let $x\in T$ be the preimage of $y$ under $f$, that is $f(x)=y$.  For every $n$, let $T_n$ be the tree obtained from $T$ by attaching a path $P_n=t_1\cdots t_n$ of length $n-1$ to $x$ using an edge. The embedding $f$ can be extended to an embedding $f_n:T_n\to T$ by sending the vertices of the finite path $P_n$ to an initial segment of the ray $R$. Thus, we have $T\approx T_n$ for each $n$. For each $n$, let $H_s^n$ be the leafy branch of $T_n$ containing $x$ ($s$ might be equal to $x$). Note that for each $n$, $T^\infty_n=T^\infty$. By Lemma \ref{Boundonrank} there is a finite bound $M$ on the ranks of the $H_r$ where $H_r$ is a leafy branch of $T$. Let $n>m>M$ and $f:T_n\to T_m$ be an isomorphism.  By Lemma \ref{Leaftoleafless} (2), $f(T_n^\infty)=T_m^\infty$ and $f((H_s^n,s))=(H_r,r)$ for some leafy branch $H_r$ of $T_m$. But this is not possible because $T_n$ has a leafy branch $H_s^n$ containing the leaf $t_n$ at distance $n+k$ from $T^\infty$ for some non-negative integer $k$, while the maximum distance of a leaf of $T_m$ from $T^\infty$ is $m+k$. Hence,  for $n> m > M$, $T_n\ncong T_m$ meaning that $Sib(T)=\infty$.  
\end{proof}

Lemma \ref{Infiniteleaf} provides a useful tool that we will use in the proof of the following proposition.  Note that when a tree $T$ has no end, then $T$ is rayless and recall that Bonato and Tardif proved that a rayless tree has one or infinitely many siblings (see \cite{BT} Theorem 1). 

\begin{proposition} \label{Finiteleaf}
Let $T$ be a tree of finite rank with only finitely many leafy branches. Then $Sib(T)=1$ or $\infty$. 
\end{proposition}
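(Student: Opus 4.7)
If $T^\infty=\emptyset$, then the finite-rank hypothesis together with Corollary~\ref{Oneendinfrank} forces $T$ to be rayless, and the Bonato--Tardif theorem for rayless trees \cite{BT} applies; so I may assume $T^\infty\neq\emptyset$. The plan is then to suppose $Sib(T)\neq 1$ and prove $Sib(T)=\infty$ by splitting on the rooted sibling numbers of the leafy branches. If some $(H_{r_i},r_i)$ satisfies $Sib((H_{r_i},r_i))\neq 1$, then by Tyomkyn's dichotomy for rooted trees this quantity is infinite, and Lemma~\ref{Rootedcomponentwithinfinite} immediately delivers $Sib(T)=\infty$. The interesting case is when every rooted leafy branch has sibling number one; there the goal will be to produce a ray in the complement of a self-embedding of $T$ and apply Lemma~\ref{Infiniteleaf}.

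Fix a sibling $S\not\cong T$ with embeddings $f\colon T\to S$ and $g\colon S\to T$, and set $h=g\circ f$, which is necessarily non-surjective since otherwise it would be an automorphism forcing $T\cong S$. I would argue by contradiction that $T\setminus h(T)$ contains a ray: suppose not. Then Lemma~\ref{Leaftoleafless}(1) forces $h(T^\infty)=T^\infty$ and $h(H_{r_i}\setminus\{r_i\})\cap T^\infty=\emptyset$ for each $i$, so $h$ sends each leafy branch $H_{r_i}$ into a leafy branch $H_{\sigma(r_i)}$ with $h(r_i)=\sigma(r_i)$, and $\sigma$ is a permutation of the finite set of leafy branches. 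Any ray in $S\setminus f(T)$ would be transported by $g$ (injectively, adjacency-preservingly) to a ray in $T\setminus h(T)$, so $S\setminus f(T)$ is rayless as well; the same analysis applied to $f$ yields $f(T^\infty)=S^\infty$, shows that $f|_{T^\infty}$ is an isomorphism of the leafless cores, and forces $S$ to have exactly as many leafy branches as $T$, with their roots in bijection with those of $T$ under $f|_{T^\infty}$.

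Iterating $h$ around each cycle of $\sigma$ produces mutual rooted embeddings among all leafy branches in the cycle, so they are pairwise equimorphic as rooted trees; chaining with the corresponding $S$-leafy branches (rooted at the vertices $f(r_i)$) shows that each $(H^S_{f(r_i)},f(r_i))$ is equimorphic to $(H_{r_i},r_i)$. Under the hypothesis that every $(H_{r_i},r_i)$ has rooted sibling number one, all these rooted trees are pairwise isomorphic, and in particular $(H^S_{f(r_i)},f(r_i))\cong(H_{r_i},r_i)$ for every $i$. Picking rooted isomorphisms $\phi_i\colon(H_{r_i},r_i)\to(H^S_{f(r_i)},f(r_i))$ and splicing them with $f|_{T^\infty}$ along the common vertices $r_i$ then assembles a graph isomorphism $\tilde f\colon T\to S$, contradicting $S\not\cong T$. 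Hence $T\setminus h(T)$ does contain a ray, and since $T$ has finite rank, Lemma~\ref{Infiniteleaf} yields $Sib(T)=\infty$.

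The main obstacle is verifying that the spliced map $\tilde f$ really is a graph isomorphism: this relies on the leaf representation decomposing both $T$ and $S$ as edge-disjoint unions of their leafless cores with finitely many leafy branches meeting only at the roots, so no cross-piece edges can be spoiled. Finiteness of the number of leafy branches is essential to make $\sigma$ a permutation of a finite set, whose finite order and closed cycles drive the equimorphy step and, together with the rooted-sibling-number-one hypothesis, promote the embeddings between corresponding leafy branches to isomorphisms. The finite-rank hypothesis, by contrast, is only used at the very end to invoke Lemma~\ref{Infiniteleaf}.
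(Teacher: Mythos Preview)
Your argument is correct and follows the same overall strategy as the paper: dispose of the rayless case via Corollary~\ref{Oneendinfrank} and \cite{BT}, invoke Lemma~\ref{Rootedcomponentwithinfinite} if some rooted leafy branch has infinitely many siblings, and otherwise show that the absence of a ray in the complement of a self-embedding forces the leafy branches to be permuted and hence, by the sibling-number-one hypothesis, pairwise isomorphic. The paper packages this last step as a direct case split---in its Case~2 it argues that \emph{every} self-embedding of $T$ is an automorphism, yielding $Sib(T)=1$ without ever leaving $T$---whereas you run the contrapositive through a fixed non-isomorphic sibling $S$ and assemble an explicit isomorphism $\tilde f\colon T\to S$ by splicing $f|_{T^\infty}$ with rooted isomorphisms on the leafy branches. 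Your route is slightly longer but arguably cleaner at the end, since it builds the isomorphism explicitly rather than asserting that the given embedding is itself surjective.

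One small gap to patch: the assertion that $S$ has exactly as many leafy branches as $T$, with roots in bijection under $f|_{T^\infty}$, does not follow from analysing $f$ alone---Lemma~\ref{Leaftoleafless}(1) applied to $f$ (using that $S\setminus f(T)$ is rayless) only shows that $f$ injects the roots $r_1,\ldots,r_k$ into the set of roots of leafy branches of $S$. To bound the number of $S$'s leafy branches from above you should also observe that $T\setminus g(S)\subseteq T\setminus h(T)$ is rayless and apply Lemma~\ref{Leaftoleafless}(1) to $g$, which then injects $S$'s roots back into $\{r_1,\ldots,r_k\}$. With that two-sided count in place, your splicing construction goes through exactly as you describe.
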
 

\begin{proof} 
First note that by Corollary \ref{Oneendinfrank} we have $|\Omega(T)|\neq 1$. If $T$ is rayless, then the statement holds by \cite{BT} Theorem 1. Assume that $T$ has more than one end which implies that $T^\infty\neq\emptyset$. 
If $T$ is leafless or some leafy branch $H_r$ of $T$ has infinite sibling number as a tree rooted at $r$, then the statement holds by Theorem \ref{Leafless} and Lemma \ref{Rootedcomponentwithinfinite}. Suppose $T$ has only finitely many  leafy branches $H_{r_1}, \ldots , H_{r_k}$, $k\geq 1$, such that each $(H_{r_i},r_i)$ has only one sibling. 

\noindent{\bf Case 1} 
There exists an embedding $f$ of $T$ such that $T\setminus f(T)$ contains a ray. Then, by Lemma \ref{Infiniteleaf}, $Sib(T)=\infty$.  

\noindent{\bf Case 2}
For no embedding $f$ of $T$, $T\setminus f(T)$  contains a ray. Let $f$ be an embedding of $T$. By Lemma \ref{Leaftoleafless} (1) we have $f(T^\infty)\subseteq T^\infty$. Also, if $f(T^\infty)\subset T^\infty$, then Lemma \ref{Leaftoleafless} (1) implies that there is a ray in $T\setminus f(T)$ contradicting our assumption. Therefore, $f(T^\infty)=T^\infty$. Moreover, for each $i$, $f(H_{r_i}\setminus\{r_i\})\cap T^\infty=\emptyset$ because otherwise by Lemma \ref{Leaftoleafless} (1) there is a ray in $T\setminus f(T)$. 

\begin{claim}
Each $r_i$ is mapped to some $r_j$ by $f$.
\end{claim}

\begin{proof}
If  $f(r_i)=y\in T^\infty\setminus\{r_1,\ldots, r_k\}$ for some $i$, then some neighbour $x\in H_{r_i}$ of $r_i$ is mapped to a neighbour $z$ of $y$. Since $y\in T^\infty\setminus\{r_1,\ldots, r_k\}$, it follow that $z\in T^\infty$, a contradiction. 
\end{proof}
\noindent 
Therefore, for each $1\leq i\leq k$, there is a unique $1\leq j\leq k$ such that  $f:(H_{r_i},r_i)\hookrightarrow (H_{r_j},r_j)$. Since $\{r_1,\ldots,r_k\}$ is a finite set, each $(H_{r_i},r_i)$ is equimorphic to the $(H_{r_j},r_j)$ with $j\in f.r_i$ where $f.r_i$ is the orbit of $r_i$ under $f$. By assumption, for each $i$, $Sib((H_{r_i},r_i))=1$ which implies that $(H_{r_i},r_i)\cong (H_{r_j},r_j)$ by $f$ for every $j\in f.r_i$. From $f(T^\infty)=T^\infty$ and the above fact, we conclude that $f$ is an automorphism of $T$. Thus, $Sib(T)=1$ in this case. 
\end{proof}

By Lemmas \ref{Rootedcomponentwithinfinite} and \ref{Infiniteleaf} and Proposition \ref{Finiteleaf} we get the following. 

\begin{theorem} \label{DichT}
Let T be a tree. If one of the following holds,  then  $Sib(T)=1$ or $\infty$. 
 
 \begin{enumerate}
     \item Some leafy branch $H_r$ of T  has infinitely many siblings as a tree rooted at r.
     \item T is of finite rank and for some embedding f of T, $T\setminus f(T)$ contains a ray.
     \item T is of finite rank with only finitely many leafy branches.
 \end{enumerate} 
\end{theorem}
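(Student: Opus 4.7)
The plan is to observe that the theorem is simply a synthesis of three results already established in this section, so the proof amounts to a one-line invocation in each case.

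For case (1), the hypothesis is exactly that of Lemma \ref{Rootedcomponentwithinfinite}, which yields $Sib(T)=\infty$, and this falls under the desired dichotomy. For case (2), the assumption that $T$ has finite rank together with the existence of an embedding $f$ such that $T\setminus f(T)$ contains a ray is precisely the hypothesis of Lemma \ref{Infiniteleaf}, giving $Sib(T)=\infty$. For case (3), the assumption that $T$ has finite rank with only finitely many leafy branches matches the hypothesis of Proposition \ref{Finiteleaf}, which directly concludes $Sib(T)=1$ or $\infty$.

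So the plan is simply to state the three implications in order, citing Lemma \ref{Rootedcomponentwithinfinite}, Lemma \ref{Infiniteleaf}, and Proposition \ref{Finiteleaf} respectively. There is essentially no obstacle; all the real work has been done in the earlier results. The only point worth noting is that in the first two cases the stronger conclusion $Sib(T)=\infty$ is obtained, while in the third case the dichotomy itself is the conclusion because Proposition \ref{Finiteleaf} already internally handles the split between the rayless subcase (via Bonato--Tardif) and the case where some embedding leaves a ray in the complement (via Lemma \ref{Infiniteleaf}). Accordingly, the write-up will be a short paragraph, one sentence per bullet, with no auxiliary lemma or new construction needed.
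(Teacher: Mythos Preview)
Your proposal is correct and follows the same approach as the paper: case (1) invokes Lemma~\ref{Rootedcomponentwithinfinite}, case (2) invokes Lemma~\ref{Infiniteleaf}, and case (3) invokes Proposition~\ref{Finiteleaf}. The paper's version additionally spells out in cases (2) and (3) why the end-count forces $T^\infty\neq\emptyset$ (via Corollary~\ref{Oneendinfrank} and the rayless observation), but as you note this reasoning is already contained in the proofs of Lemma~\ref{Infiniteleaf} and Proposition~\ref{Finiteleaf}, so your streamlined citations suffice.
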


\begin{proof}
(1) It follows by Lemma \ref{Rootedcomponentwithinfinite}. 

(2) If $T$ is of finite rank, then $|\Omega(T)|\neq 1$ by Corollary \ref{Oneendinfrank}. If $T$ has no end, then for no embedding $f$ of $T$, $T\setminus f(T)$ contains a ray. Therefore, $T$ has more than one end and by Lemma  \ref{Infiniteleaf} we have $Sib(T)=\infty$. 

(3) If $T$ is of finite rank, then $|\Omega(T)|\neq 1$ by Corollary \ref{Oneendinfrank}. If $T$ has no end, then $T$ is rayless and $Sib(T)=1$ or $\infty$ by \cite{BT} Theorem 1. If $T$ has more than one end, then $Sib(T)=1$ or $\infty$ by Proposition \ref{Finiteleaf}. 
\end{proof}

\vspace{0.5cm}
\noindent{\bf \large Acknowledgements}

I would like to thank my PhD supervisors Professor Robert Woodrow and Professor Claude Laflamme for suggesting this problem and for their help and advice. 


\vspace{1cm}

\textsc{Department of Mathematics and Statistics, University of Calgary, Calgary, Alberta, Canada, T2N 1N4}

{\em Email address:} 
\url{davoud.abdikalow@ucalgary.ca}

\end{document}